\newtheorem{theorem}{Theorem}
\newtheorem{proposition}[theorem]{Proposition}
\newtheorem{lemma}[theorem]{Lemma}
\newtheorem{corollary}[theorem]{Corollary}
\theoremstyle{definition}
\newcommand{\U}{\mathcal U}
\newcommand{\w}{\omega}
\newcommand{\B}{\mathcal{B}}
\newcommand{\C}{\mathcal{C}}
\newcommand{\D}{\mathcal{D}}
\newcommand{\W}{\mathcal{W}}
\newcommand{\F}{\mathcal{F}}
\newcommand{\V}{\mathcal{V}}
\newcommand{\dd}{\mathfrak d}
\newcommand{\uhr}{\upharpoonright}
\newcommand{\la}{\langle}
\newcommand{\ra}{\rangle}
\newcommand{\nothing}[1]{}
\title[Non-CDH filters in ZFC]{Countable dense homogeneous filters and the
Menger covering property}
\author{Du\v{s}an Repov\v{s}, Lyubomyr Zdomskyy, and Shuguo Zhang}
\address{Faculty of Education, and Faculty of Mathematics and Physics,
University of Ljubljana, P. O. Box 2964, Ljubljana, Slovenia 1001.}
\email{dusan.repovs@guest.arnes.si}
\urladdr{http://www.fmf.uni-lj.si/\~{}repovs/index.htm}
\address{Kurt G\"odel Research Center for Mathematical Logic,
University of Vienna, W\"ahringer Stra\ss e 25, A-1090 Wien,
Austria.}
\email{lzdomsky@gmail.com}
\urladdr{http://www.logic.univie.ac.at/\~{}lzdomsky/}
\address{College of Mathematics, Sichuan University, Chengdu, Sichuan 610064, China.}
\email{zhangsg@scu.edu.cn}
\subjclass[2010]{Primary: 54D20. Secondary:  54D80, 22A05.}
\keywords{CDH space, Menger space, Hurewicz space, $P$-filter, $\w$-cover,
groupable cover.}
\begin{document}

\begin{abstract}
In this note we present a ZFC construction of a non-meager filter which fails to be
 countable dense homogeneous. This  answers a question of
 Hern\'andez-Guti\'errez and Hru\v{s}\'ak. The method of the proof also allows
us to obtain   a metrizable
 Baire topological group   which is strongly
locally homogeneous but not countable dense homogeneous.

\end{abstract}

\maketitle

\section{Introduction}

A topological space $X$ has property \emph{CDH}  (abbreviated from
\emph{countable dense homogeneous}) if for arbitrary countable dense
subsets $D_0, D_1$ of $X$ there exists a
 homeomorphism $\phi:X\to X$ such that $\phi[D_0]=D_1$. The study of CDH
 filters on $\w$ was initiated
in \cite{MedMil12} where the property CDH was used to find concrete
examples of non-homeomorphic filters and ultrafilters, considered with the
topology inherited from $\mathcal P(\w)$.

  The following theorem is the main result of this note.
Let us stress that we do not use any additional set-theoretic
assumptions in its proof.

\begin{theorem} \label{main}
 There exists a non-meager non-CDH filter.
\end{theorem}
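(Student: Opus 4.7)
The plan is to build, in ZFC, a non-meager filter $\mathcal{F}\subseteq 2^\omega$ together with two countable dense subsets $D_0,D_1$ of $\mathcal{F}$ that no autohomeomorphism of $\mathcal{F}$ can swap. Guided by the title and keywords, the topological invariant separating the two dense sets should be a Menger-type selection principle: I would arrange that the subspace $\mathcal{F}\setminus D_0$ satisfies a selection principle of the form $\soo$ or $\sone$ (or one of their groupable or $\omega$-cover variants), while $\mathcal{F}\setminus D_1$ does not. Any candidate homeomorphism $\phi$ with $\phi[D_0]=D_1$ would restrict to a homeomorphism $\mathcal{F}\setminus D_0\to\mathcal{F}\setminus D_1$, contradicting invariance of the chosen selection principle.

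To implement this I would carry out a transfinite recursion of length $\mathfrak c$ enumerating three families of objects: (i) meager $F_\sigma$ sets in $2^\omega$ that $\mathcal{F}$ must miss; (ii) sequences of open covers of $2^\omega$ whose traces on $\mathcal{F}\setminus D_0$ must admit the finite selections demanded by the chosen principle; and (iii) one distinguished sequence of clopen covers that $\mathcal{F}\setminus D_1$ must defeat. At each stage the partial filter is extended by a new element chosen to reconcile all three demands: non-meagerness is maintained by the standard diagonalization against the enumerated $F_\sigma$ sets; the positive selection side for $D_0$ is handled by committing to finite subfamilies of the newly enumerated covers; and the sabotage of $D_1$ is arranged by exploiting a dominating family in $\omega^\omega$ (a cheap ZFC object) to push a point of $\mathcal{F}\setminus D_1$ outside every finite subfamily of the fixed cover sequence. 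It will also be convenient to isolate, beforehand, an auxiliary lemma to the effect that under non-meagerness the Menger-type status of $\mathcal{F}\setminus D$ depends only on the homeomorphism-type of the pair, which is what actually converts the selection distinction into failure of CDH.

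The main obstacle is compatibility: non-meagerness wants $\mathcal{F}$ to be \emph{large} in Baire category, whereas the Menger-type demand on $\mathcal{F}\setminus D_0$ wants it to be \emph{small} in the covering sense. I expect the decisive step to be choosing a partition of $\omega$ into intervals $I_n$ with cardinality growing fast enough that the non-meagerness diagonalization can always be executed inside a single $I_n$, while the previously committed finite selections remain valid under further extensions because any newly added filter element agrees on each $I_n$ with one of the already-selected clopens. This interval bookkeeping is where the substantive combinatorics sit, and it is what should allow the whole construction to be carried out in pure ZFC rather than under CH or Martin's Axiom.
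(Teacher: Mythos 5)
Your opening paragraph matches the paper's strategy exactly: find two countable dense sets $\D_0,\D_1\sbst\F$ such that $\F\sm\D_1$ is Menger while $\F\sm\D_0$ is not, so no autohomeomorphism can carry $\D_0$ to $\D_1$. But the implementation you propose --- a transfinite recursion of length $\cc$ --- is precisely the method the paper sets out to avoid, and it has a genuine gap at its core. To make $\F\sm\D_0$ Menger by diagonalization you must, at some stage, commit to finite subfamilies $\V_n\sbst\U_n$ whose union covers the \emph{final} space; but at that stage only an initial segment of $\F$ has been constructed, and the $\cc$-many points added later (to witness non-meagerness, to close under finite intersections and supersets) must all land inside $\bigcup_n\bigcup\V_n$. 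Your proposed fix --- that ``any newly added filter element agrees on each $I_n$ with one of the already-selected clopens'' --- is not something you can arrange simultaneously with the non-meagerness diagonalization: the latter forces you to add elements escaping prescribed interval patterns, which is exactly what destroys previously made finite selections. Breaking this circularity is where constructions of this kind standardly invoke $\dd=\cc$ or similar equalities, which is why the paper states that the enumeration method ``does not seem to lead to a construction of non-meager non-CDH filters outright in ZFC.'' A secondary point: you spend a third of the recursion sabotaging $\F\sm\D_1$, but that side is free --- any filter containing a co-infinite set $F$ contains the Cantor set $\{F':F\sbst F'\}$, and deleting a countable dense subset of a Cantor set leaves a closed copy of $\w^\w$ in the complement, which already kills the Menger property (Lemma~\ref{easy}).

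The missing idea that makes the ZFC proof work is to obtain the Menger decomposition \emph{structurally} rather than by recursion. The paper starts from a ZFC example (Chaber--Pol, Tsaban--Zdomskyy) of a dense $X\sbst\w^\w$ all of whose finite powers are Menger but which is not Hurewicz, extracts from the failure of Hurewicz a clopen $\w$-cover $\V=\{V_n:n\in\w\}$ that is not groupable and separates disjoint finite sets, and generates $\F$ by the sets $f_m\la x_0,\dots,x_{m-1}\ra=\{n:\{x_0,\dots,x_{m-1}\}\sbst V_n\}$. Non-meagerness then falls out of non-groupability via the Jalali-Naini--Talagrand characterization, with no diagonalization at all; and $\F=\bigcup_m\F_m$ where each $\F_m$ is a continuous image of $X^m\times\mathcal P(\w)\times\w$ and hence Menger, so it suffices to pick $\D_0$ meeting each $\F_m$ in a finite set to make $\F\sm\D_0$ a countable union of Menger spaces. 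The separation clause in the choice of $\V$ is exactly what guarantees such a $\D_0$ exists. Without replacing your recursion by some ZFC object of this kind, the proposal does not close.
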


There are several constructions of non-CDH spaces  by transfinite induction using some
enumeration of all potential autohomeomorphisms, by adding points to the space under construction
 in such a way that
these homeomorphisms are ruled out one by one, see, e.g.,
\cite{MedMil12} and references therein. However, this method often
requires some equalities between cardinal characteristics and hence
does not seem to lead to a construction of non-meager non-CDH
filters outright in ZFC.

  Instead of ruling out all
 potential  autohomeomorphisms sending some countable dense subset $D_0$
  onto some other countable dense
subset $D_1$ one by one, we shall do this at once. Our idea is
rather straightforward: if a space $X$ admits two countable dense
subsets $D_0, D_1$ such that $X\setminus D_0$ is not homeomorphic to
$X\setminus D_1$, then there obviously is no autohomeomorphisms of
$X$  mapping $D_0$ onto $D_1$. We shall prove Theorem~\ref{main} by
constructing a non-meager filter $\F$ on $\w$ and two countable
dense subsets $\D_0,\D_1$ of $\F$ such that $\F\setminus\D_1$ has
the Menger property whereas $\F\setminus\D_0$ does not, see the next
section for its definition.

\section{Covering properties of Menger and Hurewicz}

We recall from
\cite{COC1}
that a topological space $X$ has
\begin{itemize}
\item \emph{the Menger (covering) property}, if   for every sequence $\la \U_n:n\in\omega\ra$
of open covers of $X$ there exists a sequence $\la \V_n : n\in\omega\ra$ such that
each $\V_n$ is a finite subfamily of $\U_n$ and the collection $\{\cup \V_n:n\in\omega\}$
is a cover of $X$;
\item \emph{the Hurewicz (covering) property}, if   for every sequence $\la \U_n:n\in\omega\ra$
of open covers of $X$ there exists a sequence $\la \V_n : n\in\omega\ra$ such that
each $\V_n$ is a finite subfamily of $\U_n$ and the collection $\{\cup \V_n:n\in\omega\}$
is a $\gamma$-cover of $X$
(a family $\U$ of subsets of a space $X$ is called a $\gamma$-cover of $X$ if
every $x\in X$ belongs to all but finitely many $U\in \U$).
\end{itemize}
These properties were introduced by Hurewicz in \cite{Hur25} and \cite{Hur27}, respectively.
It is clear that every $\sigma$-compact space has the Hurewicz property,
and the Hurewicz property implies the Menger one. It is known
\cite{ChaPol02, COC2, TsaZdo08} that none of these implications
can be reversed. The simplest example of a metrizable
space without the Menger property is the Baire space $\w^\w$.
Indeed $\la\U_n:n\in\w\ra$, where $\U_n=\{\{x\in\w^\w:x(n)=k\}:k\in\w\}$,
is a sequence of open covers of $\w^\w$ witnessing the failure of the
Menger property.

In the  proof of Theorem~\ref{main} we shall use without  mentioning
several basic facts about
these properties summarized in the following proposition.  Most likely these can be found
somewhere in the literature. However, we did not try to locate them or to present
their proofs as we believe
that the  proof of any of them should not take the reader more than a couple of minutes.

\begin{proposition} \label{easy_useful}
\begin{itemize}
 \item[$(i)$]  If a topological space $X$ has the Menger (Hurewicz) property and
 $Y$ is a continuous image of
$X$, then $Y$ is Menger (Hurewicz);
 \item[$(ii)$] If a topological space $X$ has the Menger (Hurewicz) property and
 $Y$ is a closed subspace of
$X$, then $Y$ is Menger (Hurewicz);
 \item[$(iii)$]   If a topological space $X$ has the Menger (Hurewicz) property and
 $Y$ is compact, then $X\times Y$ is Menger (Hurewicz);
 \item[$(iv)$] If $\{Y_i:i\in\w\}$ is a collection of Menger (Hurewicz) subspaces of
a space $X$, then $\bigcup_{i\in\w}Y_i$ is Menger (Hurewicz);
 \item[$(v)$]   If a topological space $X$ has the Menger (Hurewicz) property and
 $Y$ is an $F_\sigma$-subspace of
$X$, then $Y$ is Menger (Hurewicz);
\item[$(vi)$]   If a topological space $X$ has the Menger (Hurewicz) property and
 $Y$ is $\sigma$-compact, then $X\times Y$ is Menger (Hurewicz).
\end{itemize}
\end{proposition}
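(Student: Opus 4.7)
The plan is to prove the six items by exhibiting, in each case, the required sequence $\la\V_n:n\in\w\ra$ of finite subfamilies, starting from the hypothesis on $X$. Items $(v)$ and $(vi)$ will be reduced to the earlier items, so the real work lies in $(i)$--$(iv)$.

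For $(i)$, given a continuous surjection $f:X\to Y$ and open covers $\la\U_n:n\in\w\ra$ of $Y$, I would form the pulled-back open covers $\{f^{-1}(U):U\in\U_n\}$ of $X$, apply the Menger (respectively Hurewicz) property of $X$ to obtain finite $\V_n'\sbst\U_n$ with $\{\bigcup f^{-1}(\V_n'):n\in\w\}$ a cover (respectively $\gamma$-cover) of $X$, and then take $\V_n$ to be the underlying elements of $\U_n$; checking the $\gamma$-cover clause uses only $f(X)=Y$. For $(ii)$, given $Y$ closed in $X$ and open covers $\U_n$ of $Y$, I would extend each $U\in\U_n$ to some $\widetilde U$ open in $X$ with $\widetilde U\cap Y=U$, then let $\U_n'=\{\widetilde U:U\in\U_n\}\cup\{X\sm Y\}$, apply the property of $X$, and discard the extra open set $X\sm Y$. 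In the Hurewicz case one uses that every $x\in Y$ still lies in $\bigcup\V_n$ for all but finitely many $n$ because the extra set contains no point of $Y$.

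For $(iii)$, I would take open covers $\U_n$ of $X\times Y$, use compactness of $Y$ plus the tube lemma to refine them into covers by ``tubes'' $O\times Y$ with $O$ open in $X$, which reduces the problem to the Menger/Hurewicz property of $X$ via projection onto $X$. For $(iv)$, the Menger case is handled by partitioning $\w=\bigsqcup_{i\in\w}A_i$ into infinite pieces and applying the Menger property of $Y_i$ to the subsequence $\la\U_n:n\in A_i\ra$; taking the union of the resulting finite subfamilies gives what we need. The Hurewicz case of $(iv)$ is the only subtle point, since a simple partition fails to give a $\gamma$-cover of $\bigcup_i Y_i$. To get around this, for each $i\in\w$ I would apply the Hurewicz property of $Y_i$ to the tail sequence $\la\U_n:n\ge i\ra$, producing finite $\V^i_n\sbst\U_n$ for $n\ge i$ whose unions form a $\gamma$-cover of $Y_i$; then setting $\V_n:=\bigcup_{i\le n}\V^i_n$ gives finite subfamilies of $\U_n$ such that any $x\in Y_j$ lies in $\bigcup\V_n$ for all but finitely many $n\ge j$, as required.

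Finally, $(v)$ follows from $(ii)$ and $(iv)$: if $Y=\bigcup_{n\in\w}F_n$ with $F_n$ closed in $X$, then each $F_n$ is Menger/Hurewicz by $(ii)$ and hence so is $Y$ by $(iv)$. Similarly, $(vi)$ follows from $(iii)$ and $(iv)$: writing $Y=\bigcup_n K_n$ with $K_n$ compact, each $X\times K_n$ is Menger/Hurewicz by $(iii)$, so $X\times Y=\bigcup_n X\times K_n$ inherits the property by $(iv)$. The only nontrivial obstacle in the whole proposition is the Hurewicz half of $(iv)$; everything else is routine bookkeeping once the right open covers of $X$ are written down.
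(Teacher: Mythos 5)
Your proof is correct; the paper deliberately omits any proof of this proposition, describing the items as routine facts, and your arguments (pulling back covers for $(i)$, extending relatively open sets and adding $X\sm Y$ for $(ii)$, the tube lemma for $(iii)$, partitioning $\w$ for the Menger half of $(iv)$ and the tail-shift $\V_n=\bigcup_{i\le n}\V^i_n$ for the Hurewicz half, and the reductions for $(v)$ and $(vi)$) are exactly the standard verifications intended. In particular you correctly identified and handled the one genuinely non-automatic point, the Hurewicz case of $(iv)$.
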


\begin{corollary}\label{easy1}
Let $X$ be a metrizable space with the Menger property. Then $X\setminus A$ has the Menger
property for all finite subsets $A$.
\end{corollary}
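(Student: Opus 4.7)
The plan is to combine the metrizability hypothesis with clause $(v)$ of Proposition~\ref{easy_useful}. Since every finite subset $A$ of the metrizable (hence $T_1$) space $X$ is closed, the complement $X\setminus A$ is open in $X$. So it suffices to show that every open subset of a metrizable space is $F_\sigma$, after which clause $(v)$ finishes the job.

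To see that open sets in a metric space are $F_\sigma$, fix a compatible metric $d$ on $X$ and, for an open $U\subseteq X$, write
\[
U=\bigcup_{n\in\w}\bigl\{x\in X:d(x,X\setminus U)\geq 1/n\bigr\},
\]
where the set on the right is empty if $U=X$. Each set in the union is closed in $X$ as the preimage of the closed half-line $[1/n,\infty)$ under the continuous map $x\mapsto d(x,X\setminus U)$, so $U$ is $F_\sigma$ in $X$. Applied to $U=X\setminus A$, this yields that $X\setminus A$ is an $F_\sigma$-subspace of the Menger space $X$.

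The conclusion then follows from Proposition~\ref{easy_useful}$(v)$: $X\setminus A$ inherits the Menger property from $X$. There is no real obstacle here; the only point worth noting is the use of metrizability, which is what gives us the "open implies $F_\sigma$" step and which, without the metric, need not hold in an arbitrary topological space. In a non-metrizable Menger space the complement of a single point can fail to be Menger, so metrizability is genuinely used (even though for finite $A$ one could alternatively argue more directly via induction on $|A|$, peeling off one point at a time).
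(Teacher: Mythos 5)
Your argument is correct and is exactly the paper's proof: the paper's entire justification is the one-line observation that $X\setminus A$ is an $F_\sigma$-subspace of $X$, combined with Proposition~\ref{easy_useful}$(v)$. You have merely filled in the standard details (finite sets are closed in a $T_1$ space, and open subsets of metric spaces are $F_\sigma$ via the distance function), which the paper leaves implicit.
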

\begin{proof}
$X\setminus A$ is an  $F_\sigma$-subspace of $X$.
\end{proof}

By a filter on a countable set $C$ we mean a \emph{free filter},
i.e., a filter  containing all cofinite subsets of $C$. A family
$\B\subset\mathcal P(C)$ is said to be  \emph{centered} if
for any finite $\B'\subset\B$ the intersection
$\bigcap \B'$ is infinite. Any centered family generates
a filter in a natural way.

\begin{corollary} \label{fil1}
Suppose that a filter $\F$ on a countable set $C$ is generated
by a centered family $\B$ all of whose finite powers have the
Menger property when $\B$ is considered  with the subspace topology
inherited from $\mathcal P(C)$.    Then $\F$ is Menger.
\end{corollary}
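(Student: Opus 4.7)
My plan is to represent $\F$ as a countable union of Menger subspaces of $\PP(C)$ and then invoke part $(iv)$ of Proposition~\ref{easy_useful}. For each $n\in\w$ and each finite $K\sbst C$, set
\[\F_{n,K}=\Big\{F\in\PP(C): \exists B_1,\ldots,B_n\in\B\ \text{with}\ (B_1\cap\cdots\cap B_n)\sm K\sbst F\Big\}.\]
Since the convention of the paper is that $\F$ is a free filter generated by $\B$ (hence contains all cofinite sets), unwinding the definition of ``filter generated by $\B$'' gives $\F=\bigcup\{\F_{n,K}:n\in\w,\,K\in[C]^{<\w}\}$. The index set $\w\times[C]^{<\w}$ is countable.

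Next, for fixed $n$ and $K$, I would realize $\F_{n,K}$ as a continuous image of the product space $\B^n\times\PP(C)$ via
\[ \Phi_{n,K}\colon\B^n\times\PP(C)\to\PP(C),\qquad \big((B_1,\ldots,B_n),A\big)\mapsto A\cup\big((B_1\cap\cdots\cap B_n)\sm K\big).\]
Continuity in the Cantor-cube topology on $\PP(C)$ is immediate (intersection of a fixed finite number of coordinates, subtraction of a fixed finite set, and union with a free variable are all continuous in this topology), and surjectivity onto $\F_{n,K}$ is clear since every $F\in\F_{n,K}$ is itself of the form $\Phi_{n,K}(\bar B,F)$ for any witnessing tuple $\bar B\in\B^n$. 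By hypothesis $\B^n$ has the Menger property, while $\PP(C)\cong 2^\w$ is compact, so part $(iii)$ of Proposition~\ref{easy_useful} gives that $\B^n\times\PP(C)$ is Menger; part $(i)$ then transfers this to the continuous image $\F_{n,K}$.

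Finally, I would apply part $(iv)$ to the countable family $\{\F_{n,K}:n\in\w,\,K\in[C]^{<\w}\}$ to conclude that $\F$ is Menger. I do not expect a genuine obstacle here; the only point requiring care is remembering to include the cofinite modification by $K$, which is forced by the convention that filters on countable sets are free. Omitting it would capture only the possibly proper sub-collection $\{F:F\spst B_1\cap\cdots\cap B_n\ \text{for some}\ B_i\in\B\}$ rather than all of $\F$, and the decomposition above would fail.
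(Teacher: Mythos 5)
Your proof is correct and follows essentially the same route as the paper: the paper uses a single map $\phi_n\colon\B^n\times\mathcal P(C)\times [C]^{<\w}\to\mathcal P(C)$, $(B_0,\ldots,B_{n-1};X;x)\mapsto(\bigcap_{i\in n}B_i\setminus x)\cup X$, absorbing the finite set into the domain as a countable (hence $\sigma$-compact) factor, whereas you push it into the index set of the countable union --- a purely cosmetic difference. Everything else (continuity, Menger for $\B^n\times\mathcal P(C)$, continuous images, countable unions) matches the paper's appeal to Proposition~\ref{easy_useful}.
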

\begin{proof}
Consider the map $\phi_n:\B^n\times\mathcal P(C)\times [C]^{<\w},$
$\phi:\la B_0,\ldots,B_{n-1};X;x\ra\mapsto (\bigcap_{i\in n}B_i\setminus x)\bigcup X$.
It is clear that each $\phi_n$ is continuous and
 $\F=\bigcup_{n\in\w}\phi_n[\B^n\times\mathcal P(C)\times [C]^{<\w}]$.
It suffices to use Proposition~\ref{easy_useful} several times.
\end{proof}

\section{Proof of Theorem~\ref{main}}

We shall  divide the proof into two lemmas.

\begin{lemma} \label{easy}
Let $\F$ be a filter on $\w$ containing co-infinite sets. Then there exists a countable
dense subset $\D$ of $\F$ such that $\F\setminus \D$ does not have the Menger property.
\end{lemma}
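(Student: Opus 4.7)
The plan is to take $\D$ to be the collection of all cofinite subsets of $\w$. Since $\F$ is a free filter we have $\D\sbst\F$, and $\D$ is clearly countable. To see that $\D$ is dense in $\F$, observe that every basic open set in $\mathcal P(\w)$ meeting $\F$ has the form $\{X\sbst\w:s\sbst X,\ X\cap t=\emptyset\}$ for disjoint $s,t\in[\w]^{<\w}$, and then the cofinite set $\w\sm t$ belongs both to this basic open set and to $\D$.

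Next I would exploit the hypothesis by fixing a co-infinite $F\in\F$ and setting $A=\w\sm F$. Let $\F':=\{X\sbst\w:F\sbst X\}=\bigcap_{n\in F}\{X\sbst\w:n\in X\}$. By upward closure $\F'\sbst\F$, and $\F'$ is closed in $\mathcal P(\w)$ as an intersection of clopen sets; consequently $\F'\sm\D=\F'\cap(\F\sm\D)$ is a closed subspace of $\F\sm\D$. The map $X\mapsto X\cap A$ is a homeomorphism of $\F'$ onto $\mathcal P(A)$ carrying $\F'\cap\D$ onto the cofinite subsets of $A$, so $\F'\sm\D$ is homeomorphic to the space of co-infinite subsets of $A$, with the subspace topology from $\mathcal P(A)$.

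The crux will be to show this latter space fails Menger. Enumerating $A=\{a_n:n\in\w\}$ in increasing order, for each $n\in\w$ I would consider the open cover $\U_n=\{V_{n,k}:k\in\w\}$, where $V_{n,k}$ is the clopen set of co-infinite $B\sbst A$ such that $a_k$ is the $n$-th element of $A\sm B$ (a condition determined by $B\cap\{a_0,\ldots,a_k\}$). Given any finite selection $\V_n\sbst\U_n$ coded by a finite set $M_n\sbst\w$, I would inductively pick $m_0<m_1<\cdots$ with $m_n\notin M_n$ and set $B:=A\sm\{a_{m_n}:n\in\w\}$; then $B$ is co-infinite in $A$ and the $n$-th element of $A\sm B$ is $a_{m_n}\notin\{a_k:k\in M_n\}$, so $B\notin\bigcup\V_n$ for every $n$. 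Applying Proposition~\ref{easy_useful}(ii) to the closed subspace $\F'\sm\D$ then yields that $\F\sm\D$ is not Menger, as required. The only delicate step is the diagonalization, but it is a direct transcription of the standard argument that $\w^\w$ is not Menger sketched in the introduction.
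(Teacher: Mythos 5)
Your proof is correct and follows essentially the same route as the paper: both isolate the closed copy of the Cantor set $\{X:F\subseteq X\}$ sitting above a fixed co-infinite $F\in\F$ and observe that removing a countable dense subset of it leaves a non-Menger closed subspace of $\F\setminus\D$. The only difference is that you make the dense set concrete (the cofinite sets) and verify the failure of the Menger property by a direct diagonalization, whereas the paper simply quotes that a Cantor set minus a countable dense subset is homeomorphic to $\w^\w$.
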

\begin{proof}
Let us find $F\in\F$  such that $|\w\setminus F|=\w$ and consider
the subspace $\C=\{F'\in\F:F\subset F'\}$ of $\F$. Let $\D',\D''$ be
countable dense subspaces of $\C$ and $\F\setminus\C$, respectively.
Notice that $\C$ is a copy of the Cantor set being compact
zero-dimensional space without isolated points.
 Therefore $\C\setminus \D'$ is homeomorphic to $\w^\w$. Thus  $\F\setminus(\D'\cup\D'')$
has a closed subspace homeomorphic to $\w^\w$ (namely $\C\setminus
\D'$)  and hence does not have the Menger property.
\end{proof}

  A collection $\U$ of subsets of $X$ is called an \emph{$\w$-cover}
of $X$ if $X\not\in\U$ and for every $A\in [X]^{<\w}$ there exists
$U\in\U$ such that $A\subset U$. A collection $\U$ is a
\emph{groupable $\w$-cover of $X$} if there is a partition
$\{\U_n:n\in\w\}$ of $\U$ into pairwise disjoint finite sets such
that for each finite subset $A$ of $X$ and for all but finitely many
$n$, there exists $U\in\U_n$ such that $A\subset\U$.

The existence of spaces $X$ such as in the following lemma was first established in
 \cite{ChaPol02}, see also  \cite[Corollary~6.4]{TsaZdo08}.

\begin{lemma} \label{easy2}
Let $X$ be a dense subspace of $\w^\w$ such that all finite powers
of $X$ have the Menger property but $X$ fails to have the Hurewicz
property. Then there exists a clopen $\w$-cover $\V$ of $X$ which
fails to be a groupable $\w$-cover of $X$, and such that for any two
disjoint finite subsets $A,C$ of $X$ the set $\{V\in\V:A\subset V
\wedge V\cap C=\emptyset \}$ is infinite.
\end{lemma}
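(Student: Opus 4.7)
The plan is to invoke the Ko\v{c}inac--Scheepers characterization of the Hurewicz property and then perform a careful clopen refinement inside $\w^\w$. Recall that for a space all of whose finite powers are Lindel\"of, the Hurewicz property is equivalent to the statement that every open $\w$-cover contains a groupable $\w$-subcover. Since all finite powers of $X$ are Menger and hence Lindel\"of, the failure of the Hurewicz property yields an open $\w$-cover $\U$ of $X$ admitting no groupable $\w$-subcover, and by Lindel\"ofness we may take $\U$ to be countable.

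The next step is to refine $\U$ to a clopen $\w$-cover exploiting the zero-dimensionality of $X\subset\w^\w$. Setting $[s]=\{x\in\w^\w:s\subset x\}$ for $s\in\w^{<\w}$, let $\V$ consist of all sets of the form $V=\bigl(\bigcup_{s\in F}[s]\bigr)\cap X$ with $F\subset\w^{<\w}$ finite, $V\subsetneq X$, and $V\subset U$ for some $U\in\U$. Each such $V$ is clopen in $X$ and proper, and $\V$ is an $\w$-cover of $X$: given finite $A\subset X$, pick $U\in\U$ with $A\subset U$, choose $n_a$ with $[a\uhr n_a]\cap X\subset U$, and note that $\bigl(\bigcup_{a\in A}[a\uhr n_a]\bigr)\cap X$ lies in $\V$ and contains $A$.

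I would verify non-groupability of $\V$ by contraposition: a hypothetical groupable partition $\V=\bigsqcup_k\V_k$, combined with a selector $V\mapsto U(V)\in\U$ satisfying $V\subset U(V)$, produces the partition $\bigsqcup_k\{U(V):V\in\V_k\}$ of an $\w$-subcover of $\U$ into finite pieces witnessing groupability, contradicting the choice of $\U$. For condition $(2)$, given disjoint finite $A,C\subset X$, fix $U\in\U$ with $A\subset U$ and take $n$ large enough that $[a\uhr n]\cap X\subset U$ and $[a\uhr n]\cap C=\emptyset$ for every $a\in A$; then $V_n=\bigl(\bigcup_{a\in A}[a\uhr n]\bigr)\cap X$ lies in $\V$, contains $A$, and is disjoint from $C$. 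Varying $n$ yields infinitely many distinct $V_n$, because the cylinders $[a\uhr n]$ are strictly nested in $\w^\w$ and $X$ is dense.

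The principal difficulty is reconciling non-groupability with the richness condition $(2)$: basic cylinder unions sufficient for $(2)$ are groupable on every space, while an abstractly produced non-groupable $\w$-cover carries no $\w^\w$-specific separation. The above construction resolves this by letting $\V$ consist precisely of the basic clopen unions that refine the given $\U$; shrinking neighborhoods of $A$ inside $\w^\w$ then witness $(2)$, while the refinement map $V\mapsto U(V)$ transfers any putative grouping of $\V$ to one of an $\w$-subcover of $\U$.
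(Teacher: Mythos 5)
Your overall architecture is sound and genuinely different from the paper's, but it rests on a false ``recall'' in the very first step. The characterization you invoke is not a theorem: for a space all of whose finite powers are Lindel\"of, the Hurewicz property is \emph{not} equivalent to every open $\w$-cover containing a groupable $\w$-subcover. Indeed $\w^\w$ itself is a counterexample. Given a countable open $\w$-cover $\{U_n:n\in\w\}$ of a Polish space, the filter on $\w$ generated by the centered family $\{\{n:A\subset U_n\}:A\in[X]^{<\w}\}$ is analytic, hence has the Baire property, hence is meager by Talagrand's theorem; by the Talagrand--Jalali-Naini interval characterization, meagerness of this filter is exactly groupability of the cover. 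So \emph{every} countable open $\w$-cover of $\w^\w$ is groupable, although $\w^\w$ is not even Menger. What Ko\v{c}inac and Scheepers actually prove (Theorem~16 of the paper cited here) is the sequential statement: all finite powers of $X$ are Hurewicz iff $X$ satisfies $\mathsf{S}_{fin}(\Omega,\Omega^{gp})$, i.e., iff from every \emph{sequence} of open $\w$-covers one can select finite subfamilies whose union is a groupable $\w$-cover. Its negation hands you a sequence of covers, not a single one, and this is precisely why the paper's proof is organized around such a sequence, using the Menger property of the finite powers of $X\setminus B_k$ to shrink each cover in the sequence to a finite piece.

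The gap is repairable, but only by using the hypothesis you never touch in step~1, namely that all finite powers of $X$ are Menger, i.e., that $X$ satisfies $\mathsf{S}_{fin}(\Omega,\Omega)$. If every countable open $\w$-cover of $X$ contained a groupable $\w$-subcover, then for any sequence $\la\U_n:n\in\w\ra$ of open $\w$-covers one could first select $\V_n\in[\U_n]^{<\w}$ with $\bigcup_{n\in\w}\V_n$ an $\w$-cover, then pass to a groupable $\w$-subcover $\W$ of it; the families $\W\cap\V_n$ would witness $\mathsf{S}_{fin}(\Omega,\Omega^{gp})$, so all finite powers of $X$ would be Hurewicz, contradicting the hypothesis. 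With this argument substituted for the false characterization, your single bad cover $\U$ exists, and the remainder of your proof --- the clopen refinement by finite unions of cylinders, the push-forward of a hypothetical grouping along a selector $V\mapsto U(V)$, and the shrinking-cylinder verification of the separation property --- does go through and is shorter than the paper's construction. (The fact that the pushed-forward groups need not be pairwise disjoint is a standard point which the paper's own non-groupability argument glosses over in exactly the same way, so I do not count it against you.)
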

\begin{proof}
Applying \cite[Theorem~16]{KocSch03} we can find a sequence $\la\U_n:n\in\w\ra$
of clopen $\w$-covers of $X$ such that for any sequence $\la\V_n:n\in\w\ra$, $\V_n\in [\U_n]^{<\w}$,
the union $\bigcup_{n\in\w}\V_n$ fails to be a groupable $\w$-cover of $X$.
Passing to finer $\w$-covers, if necessary, we may additionally assume that
$\U_{n+1}$ is a refinement of $\U_n$ for all $n$, and the projection
of each element of $\U_0$ onto the $0$th coordinate (recall that $\U_0$ is a family of
subsets of $\w^\w$) is finite.

For every $s\in\w^{<\w}$ we shall  denote  the basic open subset  $\{x\in\w^\w:x\uhr |s|=s\}$
of $\w^\w$ by $[s] $. Let $\B=\{B_k:k\in\w\}$ be the family of all finite unions of the sets of the form $[s]$, $s\in\w^{<\w}\setminus
\{\emptyset\}$. It follows from our restrictions on $\U_n$'s that
$\W_{n,k}=\{U\setminus B_k:U\in\U_n\}$ is an $\w$-cover of $X\setminus B_k$ for all $n,k\in\w$
(because no  $U\in\U_n$ contains $X\setminus B_k$).
 Let us decompose $\w$ into countably many disjoint infinite sets $\{I_k:k\in\w\}$
and for every $k\in\w$ consider the sequence $\la \W_{n,k}:n\in I_k \ra$ of clopen $\w$-covers of
$X\setminus B_k$. Since $ X\setminus B_k$ is a clopen subset of $X$,
all of its finite powers have the Menger property, and hence
there exists a sequence $\la \V_{n,k}:n\in I_k \ra$ such that $\V_{n,k}\in [\W_{n,k}]^{<\w}$
and $\bigcup_{n\in I_k}\V_{n,k}$ is an $\w$-cover of $X\setminus B_k$, see \cite{Ark86} or \cite[Theorem~3.9]{COC2}.
Since $\{X\setminus B_k:k\in\w\}$ is an $\w$-cover of $X$, we have that
$\V=\bigcup_{k\in\w,n\in I_k}\V_{n,k}$ is an $\w$-cover of $X$.
Each element of $\V_{n,k}$ is included in some element of $\U_n$, and hence
$\V$ is not groupable.

Finally, let us fix some disjoint finite subsets $A,C\subset X$ and find $k\in\w$
such that $C\subset B_k $ and $A\cap B_k=\emptyset$.  Since $\bigcup_{n\in I_k}\V_{n,k}$
is an $\w$-cover of $X\setminus B_k$, there are infinitely many elements of
$\bigcup_{n\in I_k}\V_{n,k}$ which contain $A$. By the construction, all of them are
disjoint from $C$. This completes our proof.
\end{proof}

\noindent We are now in a position  to complete the proof of Theorem~\ref{main}. \\
In light of Lemma~\ref{easy} it is enough to construct a
 non-meager filter $\F$ and a countable dense $\D\subset\F$ such that
$\F\setminus \D$ has the Menger property. Let $X$ and $\V$ be such as in
Lemma~\ref{easy2}. Let us fix a bijective  enumeration $\{V_n:n\in\w\}$
of $\V$ and for every $m\in\w$ consider the mapping
$f_m:X^m\to \mathcal P(\w) $,
$$f_m\la x_0,\ldots,x_{m-1}\ra=\{n: \{x_0,\ldots,x_{m-1}\}\subset V_n\}.$$
Since $\V$ is an $\w$-cover of $X$ we have $f_m[X^m]\subset [\w]^\w$
for all $m$. Moreover, it is easy to see that
$\mathcal Y=\bigcup_{m\in\w}f_m[X^m]$ is closed under finite intersections of its elements
and hence it generates the filter $\F=\{F\subset\w: Y\subset^* F$ for some $Y\in\mathcal Y\}$.
We claim that $\F$ is as required. Indeed, by  Talagrand and Jalali-Naini's
characterization \cite[Proposition~9.4]{Bla10}
the non-meagerness of  $\F$ is a direct consequence of $\V$  not being groupable.

Let us write $\F$ in the form $\bigcup_{m\in\w}\F_m$, where
$\F_m=\{F\subset\w: Y\subset^* F$ for some $Y\in f_m[X^m]\}$.
Consider the map $g_m:X^m\times \mathcal P(\w)\times\w\to\mathcal P(\w)$, $g_m(a,b,c)=(f_m(a)\setminus c)\cup b$.
It is easy to see that $g_m$ is continuous and $\F_m=g_m[X^m\times \mathcal P(\w)\times\w]$.
Since the Menger property is preserved by products with $\sigma$-compact spaces and
continuous images, we conclude that $\F_m$ has the  Menger property for all $m\in\w$.

Now let us fix any injective sequence $\la x_m:m\in\w\ra$ of elements of $X$
 and set $F_m= f_m\la x_0,\ldots,x_{m-1}\ra$ for all $m>0$. Set also $F_0=\w$.
The sequence  $\la F_m:m\in\w\ra\in\F^\w$ has the property   that
$\{F_m: m \in\w \}\cap\F_k=\{F_m:m \in k+1\}$ for every $k\in\w$.
Indeed, otherwise there exists $\la x'_0,\ldots,x'_{k-1}\ra\in X^k$
such that $\phi_k\la x'_0,\ldots,x'_{k-1}\ra\subset^* F_{k+1}$
(because the sequence  $\la F_m:m\in\w\ra$ is decreasing).
 Since the sequence
$\la x_m:m\in\w\ra$ is injective, there exists $j\leq k$ such that $x_j\not\in \{x'_0,\ldots,x'_{k-1}\}$,
and hence by our choice of $\V$
there exist infinitely many $n\in\w$ such that $\{x'_0,\ldots,x'_{k-1}\}\subset V_n$ and $x_j\not\in V_n$.
However, all these $n$'s are in $\phi_k\la x'_0,\ldots,x'_{k-1}\ra$ but not in
 $f_1(x_j)\supset f_{k+1}\la x_0,\ldots,x_{k}\ra =F_{k+1}$, a contradiction.

Since each $\F_k$ is closed under finite modifications of its elements,
we can conclude that for any sequence $\la F'_m:m\in\w\ra\in\F^\w$,
if $F_m =^* F'_m$ for all $m$, then $\{F'_m: m \in\w \}\cap\F_k=\{F'_m:m \in k+1\}$
for all $k$.

Let  $\{s_m:m\in\w\}$ be an enumeration of $2^{<\w}$ and $F'_m=[F_m\setminus (\mathrm{dom}(s_m))]\cup s_m^{-1}(1)$.
It is clear that $\D=\{F'_m:m\in\w\}$ is dense in $\F$.
It follows from the above that
$$\F\setminus\D = \bigcup_{k\in\w}\F_k\setminus \D=\bigcup_{k\in\w}(\F_k\setminus \{F'_m:m\in k+1\}).$$
Since each $\F_k$ has the  Menger property,  by Corollary~\ref{easy1} we have
  that $\F\setminus\D$ is a countable union of
its subspaces with the Menger property, and hence  itself has the Menger property.
 This completes our proof.
\hfill $\Box$
\medskip

 A space $X$ is called \emph{strongly locally homogeneous}
if it has an open base $\mathcal B$ such that, for each
$U\in\mathcal B$ and points $x,y \in  U,$ there exists a
homeomorphism $h: X \to X$ with $h(x) = y$ and $h\uhr (X\setminus
U)$ equal to the identity. It has been shown in  \cite{AndCurMil82}
that every strongly locally homogeneous Polish space  is CDH. This
result is not anymore true for Baire spaces, even in the realm of
separable metrizable spaces: for every $n\in\w\cup\{\infty\}$
there exists an $n$-dimensional
 Baire space  which is strongly
locally homogeneous but not CDH, see \cite[Remark 4.1]{vMil82}.

However, spaces constructed in \cite{vMil82} are not topological
groups, see Theorem~3.5 there. Thus the filter $\mathcal F$ constructed in the proof of 
Theorem 1 seems to be the first example
of a metrizable separable Baire topological group which is strongly
 locally homogeneous but not CDH. It might be also worth noticing that it cannot be made CDH
by products with metrizable compact spaces.

\begin{proposition}\label{from_proof}
Let $\F$ be the filter constructed in the proof of
Theorem~\ref{main}. Then $\F\times Y$ is not CDH for any metrizable
compact $Y$. 
\end{proposition}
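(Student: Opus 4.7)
The plan is to find countable dense subsets $E_0,E_1\sbst\F\times Y$ such that $(\F\times Y)\sm E_0$ has the Menger property whereas $(\F\times Y)\sm E_1$ does not; exactly as in the proof of Theorem~\ref{main}, this will show that no autohomeomorphism of $\F\times Y$ can map $E_0$ onto $E_1$, and hence $\F\times Y$ is not CDH.

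For $E_0$, I will lift the sequence $\la F'_m:m\in\w\ra$ from the proof of Theorem~\ref{main} into the product. A diagonalisation over a countable base $\{U_n\times V_n:n\in\w\}$ of $\F\times Y$ produces a sequence $\la y_m:m\in\w\ra$ in $Y$ so that $E_0=\{(F'_m,y_m):m\in\w\}$ is dense in $\F\times Y$: at step $n$, pick any previously unused $m_n$ with $F'_{m_n}\in U_n$ (infinitely many such $m$ exist by density of $\{F'_m\}$ in $\F$) and put $y_{m_n}\in V_n$, choosing the remaining $y_m$ arbitrarily. The key identity $\{F'_m:m\in\w\}\cap\F_k=\{F'_m:m\leq k\}$ established in the proof of Theorem~\ref{main} depends only on the first coordinates, so $|E_0\cap(\F_k\times Y)|\leq k+1$ for every $k$. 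Since each $\F_k$ is Menger and $Y$ is compact, Proposition~\ref{easy_useful}(vi) gives that $\F_k\times Y$ is Menger, whence $(\F_k\times Y)\sm E_0$ is an $F_\sigma$-subspace of a Menger space, itself Menger by Proposition~\ref{easy_useful}(v). Finally $(\F\times Y)\sm E_0=\bigcup_k[(\F_k\times Y)\sm E_0]$ is Menger by Proposition~\ref{easy_useful}(iv).

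For $E_1$, I will mimic Lemma~\ref{easy} inside the product. Fix $F\in\F$ with $|\w\sm F|=\w$ (for instance $F=f_1(x_0)$ for any $x_0\in X$, which is co-infinite because, by Lemma~\ref{easy2} applied with $A=\emptyset$ and $C=\{x_0\}$, infinitely many $V\in\V$ miss $x_0$) and let $\C=\upa F=\{F'\sbst\w:F\sbst F'\}\sbst\F$, a Cantor set. Because $\F$ contains every cofinite subset of $\w$, a short check shows $\C$ to be closed and nowhere dense in $\F$, so for any $y_0\in Y$ the complement $(\F\times Y)\sm(\C\times\{y_0\})$ is open dense in $\F\times Y$ and hence carries a countable dense subset $E''$ of $\F\times Y$. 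Choosing a countable dense $\D'\sbst\C$ and setting $E_1=(\D'\times\{y_0\})\cup E''$, one has $E_1\cap(\C\times\{y_0\})=\D'\times\{y_0\}$, so $(\C\times\{y_0\})\sm E_1$ is homeomorphic to $\C\sm\D'$, i.e., to a Cantor set minus a countable dense subset, i.e., to $\w^\w$. As $\C\times\{y_0\}$ is closed in $\F\times Y$, this copy of $\w^\w$ is closed in $(\F\times Y)\sm E_1$; since $\w^\w$ is not Menger, neither is $(\F\times Y)\sm E_1$ by Proposition~\ref{easy_useful}(ii).

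The main potential subtlety is the construction of $\la y_m:m\in\w\ra$, which must simultaneously yield density of $E_0$ and preserve the counting bound $|E_0\cap(\F_k\times Y)|\leq k+1$; since the latter is determined purely by the first coordinates of the pairs, the two demands do not interact and the diagonalisation is routine, so I do not anticipate any serious difficulty beyond faithfully reproducing the Menger-property bookkeeping already developed in the proof of Theorem~\ref{main}.
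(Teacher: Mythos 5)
Your proposal is correct and follows essentially the same route as the paper: a dense lift $\{(F'_m,y_m)\}$ of the sequence from Theorem~\ref{main} whose complement is Menger via the decomposition $\F\times Y=\bigcup_k\F_k\times Y$, together with a second dense set whose complement contains a closed copy of $\w^\w$ obtained by running the Lemma~\ref{easy} argument inside a slice $\C\times\{y_0\}$. The only difference is that you spell out the diagonalisation producing the $y_m$'s and the nowhere-density of $\C\times\{y_0\}$, which the paper leaves implicit.
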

\begin{proof}
We shall use notation from the proof of Theorem~\ref{main}. Let $\la
y_m:m\in\w\ra$ be a sequence of elements of $Y$ such that $\mathcal
D_p:=\{\la F'_m,y_m\ra:m\in\w\}$ is dense in $\F\times Y$. Since
$\F_k\cap\{F'_m:m\in\w\}$ is finite for all $k\in\w$, we have that
$(\F_k\times Y)\cap\mathcal D_p$ is finite for all $k\in\w$. Since
$\F_k$ is Menger, so is $\F_k\times Y$, and hence $(\F_k\times
Y)\setminus\mathcal D_p$ is Menger as well. Therefore $(\F\times
Y)\setminus\mathcal D_p=\bigcup_{k\in\w}(\F_k\times
Y)\setminus\mathcal D_p$ is also Menger. Thus $\F\times Y$ has a
countable dense subset with Menger complement.

On  the other hand, the same argument as in the proof of
Lemma~\ref{easy} implies that $\F\times Y$ has a countable dense
subset whose complements in $\F\times Y$ is not Menger. This
completes our proof.
\end{proof}

We call a filter $\F$ on $\w$ a \emph{$P^+$-filter,} if for every
sequence $\la A_n:n\in\w\ra$ of elements of
$\F^+=\{X\subset\w:\forall F\in\F (X\cap F\neq\emptyset)\}$  there
is a sequence $\la B_n:n\in\w\ra$ such that $B_n\in [A_n]^{<\w}$ and
$\bigcup_{n\in\w}B_n\in\F^+$. Replacing $\F^+$ with $\F$ in the
definition above we get the classical notion of a $P$-filter. Every
filter with the Menger property is a $P^+$-filter. Indeed,  the
Menger property applied to the collection $\{\U_A:A\in\F^+\}$ of
open covers of $\F$, where $\U_A=\{\{X\subset\w:n\in X\}:n\in A\}$,
gives nothing else but the definition of $P^+$-filters. If $\F$ is
an ultrafilter then $\F^+=\F$ and hence $\F$ is a $P$-filter if and
only if it is a $P^+$-filter. Since the non-meager non-CDH filter
constructed in the proof of Theorem~\ref{main} has the Menger
property, we get the following

\begin{corollary}\label{cor}
There exists a non-meager $P^+$-filter which is not CDH.
\end{corollary}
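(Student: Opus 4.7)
The plan is to simply combine the two observations already prepared in the paragraph preceding the statement. First I would verify that the filter $\F$ produced in the proof of Theorem~\ref{main} itself (and not merely a certain complement $\F\setminus\D$) has the Menger property. This is essentially implicit in that proof: there one writes $\F=\bigcup_{m\in\w}\F_m$, where each $\F_m=g_m[X^m\times\PP(\w)\times\w]$ is a continuous image of the product of a Menger space with a $\sigma$-compact space, and is therefore Menger by Proposition~\ref{easy_useful}$(i)$ and $(vi)$. A further application of Proposition~\ref{easy_useful}$(iv)$ then yields that $\F$ itself is Menger.

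Second, I would reproduce the one-line observation that every Menger filter is a $P^+$-filter. Given a sequence $\la A_n:n\in\w\ra$ in $(\F^+)^\w$, the family $\U_{A_n}=\{\{X\sbst\w:n\in X\}:n\in A_n\}$ is an open cover of $\F$, since each $A_n$ meets every $F\in\F$. Applying the Menger property to the sequence $\la\U_{A_n}:n\in\w\ra$ produces finite subfamilies $\V_n\sbst\U_{A_n}$ such that $\{\cup\V_n:n\in\w\}$ covers $\F$; the associated finite sets $B_n\sbst A_n$ then satisfy $\bigcup_{n\in\w}B_n\in\F^+$, because by construction $\bigcup_n B_n$ meets every $F\in\F$.

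Combining these two ingredients with the non-meagerness of $\F$ (already ensured in the proof of Theorem~\ref{main} via the Talagrand--Jalali-Naini characterization) and the failure of CDH (the conclusion of Theorem~\ref{main} itself) immediately yields the corollary.

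The main obstacle is essentially cosmetic: both components are routine consequences of what has been established. One only needs to notice that the proof of Theorem~\ref{main} in fact gives the slightly stronger conclusion that $\F$ has the Menger property, rather than only $\F\setminus\D$; the rest is a direct unpacking of definitions together with Proposition~\ref{easy_useful}.
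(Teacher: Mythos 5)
Your proposal is correct and follows exactly the paper's route: the paper likewise observes that $\F=\bigcup_{m\in\w}\F_m$ is Menger (each $\F_m$ being a continuous image of a Menger-times-$\sigma$-compact product), invokes the same cover argument with $\U_A=\{\{X\subset\w:n\in X\}:n\in A\}$ to show every Menger filter is a $P^+$-filter, and combines this with the non-meagerness and non-CDH conclusions of Theorem~\ref{main}. Nothing is missing.
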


 Corollary~\ref{cor} follows directly from  Theorem~\ref{main},
and hence its proof does not require anything beyond ZFC.
Corollary~\ref{cor} implies that one of the main results of
\cite{HruGot??} which states that non-meager $P$-filters are  CDH is
sharp in the sense that it cannot be extended to $P^+$-filters, even
under  additional set-theoretic assumptions.

Following \cite{BanZdo} we call filters $\F_0$ and $\F_1$ on $\w$ \emph{coherent},
if there exists a monotone surjection $\psi:\w\to\w$ such that $\psi[\F_0]=\psi[\F_1]$.
It is easy to see that the coherence is an equivalence relation.
It has been shown in \cite{BlaLaf89} that in the  model
constructed by Miller in \cite{Mil84}, any two non-meager filters are coherent
and  there exists a $P$-point, i.e., an ultrafilter which is a $P$-filter.
 Together with Theorem~\ref{main} this proves the following

\begin{corollary} \label{cor1}
In the Miller model the collection of all CDH filters is not closed under
the coherence relation.
\end{corollary}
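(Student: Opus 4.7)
The plan is to assemble three facts already available in the Miller model into a direct witness of non-closure. Concretely, I would exhibit two coherent non-meager filters on $\w$, one of which is CDH and the other not.

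First, I would apply Theorem~\ref{main} to produce a non-meager filter $\F$ on $\w$ that fails to be CDH. Since Theorem~\ref{main} is proved entirely in ZFC, the filter $\F$ exists in the Miller model as well (either reading the construction inside the model or transferring it by absoluteness of the relevant statements).

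Second, I would invoke the feature of the Miller model recalled just before the corollary, namely that it contains a $P$-point $\cU$. Being an ultrafilter, $\cU$ is automatically non-meager; by definition of $P$-point it is a $P$-filter. The theorem of Hern\'andez-Guti\'errez and Hru\v{s}\'ak cited in the sentence after Corollary~\ref{cor} (non-meager $P$-filters are CDH) then gives that $\cU$ is CDH.

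Third, I would use the Blass--Laflamme result from \cite{BlaLaf89} recalled in the same paragraph: in the Miller model any two non-meager filters on $\w$ are coherent. Applying this to the two non-meager filters $\F$ and $\cU$, there exists a monotone surjection $\psi:\w\to\w$ with $\psi[\F]=\psi[\cU]$, so $\F$ and $\cU$ are coherent. Combining the three steps, $\cU$ is a CDH filter coherent with the non-CDH filter $\F$, which means the class of CDH filters on $\w$ is not closed under the coherence relation in the Miller model. There is essentially no obstacle beyond a careful citation of the three ingredients; all of the real work is already done in Theorem~\ref{main} and in the papers quoted above.
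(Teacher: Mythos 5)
Your proof is correct and follows essentially the same route as the paper: the paper's (implicit) argument in the paragraph preceding Corollary~\ref{cor1} is exactly the combination of Theorem~\ref{main}, the Blass--Laflamme facts about the Miller model (any two non-meager filters are coherent, and a $P$-point exists), and the Hern\'andez-Guti\'errez--Hru\v{s}\'ak theorem that non-meager $P$-filters are CDH. Nothing is missing.
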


We do not know whether Corollary~\ref{cor1} is true in ZFC.
Let us note that a ZFC proof of it would require at least a  construction
of a CDH filter without any additional set-theoretic assumption,
which seems to be quite a difficult task.

We recall that $\dd$ is, by  definition, the minimal cardinality
of a cover of $\w^\w$ by its compact subspaces, and $\mathfrak u$ is
the minimal cardinality of a base of an ultrafilter on $\w$. We
refer the reader to \cite{Bla10} for more information on
$\mathfrak u$, $\dd$, and other  cardinals characteristics of the
continuum.
\medskip

\noindent\textbf{Remark.} \ In the proof of Theorem~\ref{main} we
had
 to be rather careful with the choice of $\V$. This is because  the same argument would not work
 if we started with  $\V$ which satisfies all requirements of Lemma~\ref{easy2} except for the last one,
i.e., does not allow to sufficiently distinguish  between disjoint finite subsets of
$X$. Indeed, assume $\mathfrak u<\dd$, which holds, e.g.,
 in the aforementioned Miller model,
 and let $\U$ be an ultrafilter with $\mathfrak u$-many generators.
Then $\U$ fails to have the Hurewicz property by \cite[Theorem~4.3]{COC2} being a non-meager subset of
$[\w]^\w$. On the other hand, by \cite[Theorem~4.4]{COC2} all finite powers of $\U$ have the Menger property because
$\U$ is a union of fewer than $\dd$ compact spaces. Now set $\V=\{V_n:n\in\w\}$,
where $V_n=\{U\in\U:n\in U\}$. It is easy to check that $\V$ is an $\w$-cover of $\U$ which fails to be
groupable. Letting $X=\U$ and defining $\phi_m$'s and $\F$ in the same way as in the proof of
Theorem~\ref{main}, one can easily check that $\F=\U$. However,
$\U$ is a $P$-point, see, e.g.,  \cite[Theorem~9.25]{Bla10}. Therefore $\U$ is CDH \cite{HruGot??}
and hence by Lemma~\ref{easy} it is impossible to select a countable dense $\D\subset \U$ such that $\U\setminus\D$
has the Menger property.
\hfill $\Box$
\medskip

\noindent\textbf{Acknowledgment.} The second author  would like to
thank M. Hru\v{s}\'ak and S. Todor\v{c}evi\'c for  useful
discussions during the Winter School in abstract Analysis (Set
Theory and Topology Section) held in Hejnice in January 2013. Also,
we are grateful to A. Medini for bringing the paper \cite{vMil82} to
our attention and for detecting some inaccuracies in the previous versions.

The first author acknowledges the support
of the ARRS  grant  P1-0292-0101.
The second author would
like to  thank   the Austrian Academy of Sciences (APART Program) as
well as the Austrian Science Fund FWF (Grants M 1244-N13 and I 1209-N2)
   for generous support for this research. Parts of the work reported here were carried out during the visit of
the third named author at the  Kurt G\"odel Research Center in
November 2012. This visit was supported by the above-mentioned FWF
grant. The third  author thanks the second  one for his kind
hospitality. The third author would also like to acknowledge the
support of the NSFC grant \#11271272.

\end{document}